\documentclass[a4paper,12pt]{amsart}
\usepackage{pst-node}
\usepackage{amsmath,amsthm,amssymb,a4wide}
\usepackage[arrow,matrix]{xy}
\usepackage{hyperref}
\usepackage{ulem}
\usepackage{graphicx}
\newgray{lightgray}{0.9}

\theoremstyle{ams}
\newtheorem{theorem}{Theorem}[section]

\newtheorem{lemma}[theorem]{Lemma}

\theoremstyle{definition}

\numberwithin{figure}{section} \numberwithin{equation}{section}

\newcommand{\DHfunction}{\mathrm{DH}}

\newcommand{\Q}{\mathbb{Q}}
\newcommand{\R}{\mathbb{R}}

\begin{document}
\title{Log-concavity of complexity one Hamiltonian torus actions}

\author[Y. Cho]{Yunhyung Cho}
\address{School of Mathematics, Korea Institute for Advanced Study, 87 Hoegiro, Dongdaemun-gu,
Seoul, 130-722, Republic of Korea}
\email{yhcho@kias.re.kr}
\author[M. K. Kim]{Min Kyu Kim}
\address{Department of Mathematics Education,
Gyeongin National University of Education, San 59-12, Gyesan-dong,
Gyeyang-gu, Incheon, 407-753, Republic of Korea}
\email{mkkim@kias.re.kr}

\date{\today}
\maketitle

\begin{abstract}

Let $(M,\omega)$ be a closed $2n$-dimensional symplectic manifold
equipped with a Hamiltonian $T^{n-1}$-action. Then
Atiyah-Guillemin-Sternberg convexity theorem implies that the image
of the moment map is an $(n-1)$-dimensional convex polytope. In this
paper, we show that the density function of the Duistermaat-Heckman
measure is log-concave on the image of the moment map.

\end{abstract}

\section{Introduction}

In statistic physics, the relation $S(E) = k \log{W(E)}$ is
called Boltzmann's principle where $W$ is
the number of states with given values of macroscopic parameters $E$
(like energy, temperature, ..), $k$ is the Boltzmann's constant,
and $S$ is the entropy of the system
which measures the degree of disorder in the system.
For the additive values $E$, it is well-known the entropy is
always concave function. (See \cite{O1} for more detail).
In symplectic setting, consider a Hamiltonian $G$-manifold $(M,\omega)$
with the moment map $\mu : M \rightarrow \mathfrak{g^*}$.
The Liouville measure $m_L$ is defined by
$$m_L(U) := \int_U \frac{\omega^n}{n!} $$ for any open set $U \subset M$.
Then the push-forward measure $m_{\DHfunction} := \mu_* m_L$, called
the \textit{Duistermaat-Heckman measure}, can be regarded as a
measure on $\mathfrak{g^*}$ such that for any Borel subset $B
\subset \mathfrak{g^*}$, $m_{\DHfunction}(B) = \int_{\mu^{-1}(B)}
\frac{\omega^n}{n!}$ tells us that how many states of our system
have momenta in $B.$ By the Duistermaat-Heckman theorem \cite{DH},
$m_{\DHfunction}$ can be expressed in terms of the density function
$\DHfunction(\xi)$ with respect to the Lebesque measure on
$\mathfrak{g^*}$. Therefore the concavity of the entropy of given
Hamiltonian system on $(M,\omega)$ can be interpreted as the
log-concavity of $\DHfunction(\xi)$ on the image of $\mu$. A.
Okounkov \cite{O2} proved that the density function of the
Duistermaat-Heckman measure is log-concave on the image of the
moment map for the maximal torus action when $(M,\omega)$ is the
co-adjoint orbit of some classical Lie groups. In \cite{Gr}, W.
Graham showed the log-concavity of the density function of the
Duistermaat-Heckman measure also holds for any K\"{a}hler manifold
admitting a holomorphic Hamiltonian torus action. V. Ginzberg and A.
Knudsen conjectured independently that the log-concavity holds for
any Hamiltonian $G$-manifolds, but it turns out to be false in
general by Y. Karshon \cite{K1}. Further related works can be found
in \cite{L} and \cite{C}.

As noted in \cite{K1} and \cite{Gr}, log-concavity holds for Hamiltonian toric (i.e.
complexity zero) actions, and Y. Lin dealt with log-concavity of
complexity two Hamiltonian torus actions in \cite{L}. But, there is
no result on log-concavity of complexity one Hamiltonian torus
actions. This is why we rstrict our interest to complexity one.
From now on, we assume that $(M,\omega)$ is a $2n$-dimensional
closed symplectic manifold with an effective Hamiltonian $T^{n-1}$-action.
Let $\mu : M \rightarrow \mathfrak{t}^*$ be the corresponding moment
map where $\mathfrak{t}^*$ is a dual of the Lie algebra of
$T^{n-1}$. By Atiyah-Guillemin-Sternberg convexity theorem, the
image of the moment map $\mu(M)$ is an $(n-1)$-dimensional convex
polytope in $\mathfrak{t}^*$. By the Duistermaat-Heckman theorem
\cite{DH}, we have $$ m_{\DHfunction} = \DHfunction(\xi)d\xi $$
where $d\xi$ is the Lebesque measure on $\mathfrak{t}^* \cong
\R^{n-1}$ and $\DHfunction(\xi)$ is a continuous piecewise
polynomial function of degree less than 2 on $\mathfrak{t}^*$. Our
main theorem is as follow.

\begin{theorem}\label{main}
    Let $(M,\omega)$ be a $2n$-dimensional closed symplectic manifold
    equipped with a Hamiltonian $T^{n-1}$-action with the moment map
    $\mu : M \rightarrow \mathfrak{t}^*$. Then the density function of the Duistermaat-Heckman measure is log-concave on $\mu(M)$.
\end{theorem}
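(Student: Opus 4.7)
My plan is to reduce the general $T^{n-1}$-statement to the base case of a Hamiltonian $S^{1}$-action on a $4$-dimensional symplectic orbifold by an intermediate symplectic reduction, and then handle the base case either by invoking Karshon's K\"ahler extension theorem together with Graham's result, or by an explicit slope-jump computation at each wall.

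Because $\DHfunction$ is continuous, log-concavity on the convex polytope $\mu(M)$ is equivalent to log-concavity along every affine line segment contained in it, and by a density argument it suffices to check this on the (dense) family of rational line segments. Fix such a line $\ell \subset \mu(M)$, let $V \subset \mathfrak{t}^{*}$ be its linear direction, let $T'' \subset T^{n-1}$ be the unique codimension-one subtorus whose Lie algebra annihilates $V$, and let $T' := T^{n-1}/T'' \cong S^{1}$. Writing $\pi : \mathfrak{t}^{*} \to (\mathfrak{t}'')^{*}$ for the projection, $\ell$ lies in the fiber $\pi^{-1}(\xi'')$ over a unique $\xi'' \in (\mathfrak{t}'')^{*}$, and for generic $\ell$ the value $\xi''$ is a regular value of $\mu'' := \pi \circ \mu$. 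Then $M'' := (\mu'')^{-1}(\xi'')/T''$ is a compact symplectic $4$-orbifold carrying a residual Hamiltonian $T'$-action, and the standard compatibility of the Duistermaat--Heckman measure with symplectic reduction identifies $\DHfunction_{M}|_{\ell}$ with the one-dimensional density $\DHfunction_{M''}$ up to a positive multiplicative constant (the covolume of the relevant lattice). So it suffices to prove log-concavity for $\DHfunction_{M''}$ in the base case, and continuity of $\DHfunction$ propagates the conclusion to all of $\mu(M)$.

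On $M''$ the density $\DHfunction_{M''}$ is continuous, positive on the interior of its moment interval, and piecewise linear, with break-points occurring precisely at critical values of the $T'$-moment map. It suffices to show that at every \emph{interior} break-point $c$ the right-derivative does not exceed the left-derivative. Each fixed component lying over $c$ is either an isolated fixed point with non-zero integer isotropy weights $(a,b)$ on its (orbifold) tangent space, or a fixed $2$-orbifold with a single non-zero normal weight; its contribution to the jump $\DHfunction_{M''}'(c^{+}) - \DHfunction_{M''}'(c^{-})$ is given by the Duistermaat--Heckman localization formula applied to the equivariant normal form. Because $c$ is interior, isolated fixed points must be saddles ($ab < 0$) and fixed surfaces must be Morse--Bott critical sets of saddle type; the corresponding contributions are explicitly non-positive, and summing over components lying over $c$ yields the required slope inequality.

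The principal obstacle is the base case on an orbifold rather than a smooth manifold. On a closed symplectic $4$-\emph{manifold} Karshon's theorem provides an invariant K\"ahler structure, so Graham's theorem \cite{Gr} yields log-concavity immediately; on the orbifold $M''$ one must either establish the analogous K\"ahler extension and invoke an orbifold version of Graham, or push through the localization-plus-sign-check argument sketched above, handling the fractional weights and the isotropy groups at orbifold fixed points carefully. A secondary technical issue is to confirm that the set of rational lines $\ell$ for which $\xi''$ is a regular value of $\mu''$ is indeed dense in the space of lines in $\mu(M)$, so that continuity of $\DHfunction$ propagates the inequality to every line; this follows from the fact that the singular values of $\mu''$ form a finite union of rational subspaces of codimension at least one in $(\mathfrak{t}'')^{*}$.
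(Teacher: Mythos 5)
Your proposal follows essentially the same route as the paper: reduce along a generic rational line by performing symplectic reduction with respect to the codimension-one subtorus transverse to that line, obtaining a closed $4$-dimensional Hamiltonian $S^1$-orbifold, and then prove log-concavity there by showing the piecewise-linear density's slope can only decrease at interior critical values because the isotropy weights at interior fixed points satisfy $p_1p_2<0$ (the paper carries out this sign check via the orbifold Atiyah--Bott--Berline--Vergne localization formula applied to a symplectic cut, which is exactly the ``localization-plus-sign-check'' you sketch). The technical points you flag --- genericity/regularity of the projected value and the orbifold rather than manifold base case --- are precisely the ones the paper handles, via transversality to the x-ray faces and Meinrenken's orbifold localization respectively.
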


\section{Proof of the theorem \ref{main}}
Let $(M, \omega)$ be a $2n$-dimensional closed symplectic manifold.
Let $(n-1)$-dimensional torus $T$ acts on $(M,\omega)$ in
Hamiltonian fashion. Denote by $\mathfrak{t}$ the Lie algebra of
$T.$ For a moment map $\mu : M \rightarrow \mathfrak{t}^*$ of the
$T$-action, define the Duistermaat-Heckman function $\DHfunction :
\mathfrak{t}^* \rightarrow \R$ as
\begin{equation*}
\DHfunction (\xi) = \int_{M_\xi} \omega_\xi
\end{equation*}
where $M_\xi$ is the reduced space $\mu^{-1}(\xi)/T$
and $\omega_\xi$ is the corresponding reduced symplectic form on $M_\xi$.

Now, we define the x-ray of our action. Let $T_1 , \cdots , T_N$ be
the subgroups of $T^{n-1}$ which occur as stabilizers of points in
$M^{2n}$. Let $M_i$ be the set of points whose stabilizers are
$T_i.$ By relabeling, we can assume that $M_i$'s are connected and
the stabilizer of points in $M_i$ is $T_i.$ Then, $M^{2n}$ is a
disjoint union of $M_i$'s. Also, it is well known that $M_i$ is open
dense in its closure and the closure is just a component of the
fixed set $M^{T_i}.$ Let $\mathfrak{M}$ be the set of $M_i$'s. Then,
the \textit{x-ray} of $(M^{2n}, \omega, \mu)$ is defined as the set
of $\mu(\overline{M_i})$'s. Here, we recall a basic lemma.
\begin{lemma}\cite[Theorem 3.6]{GS}
 \label{lemma: perpendicular subalgebra}
Let $\mathfrak{h}$ be the Lie algebra of $T_i.$ Then $\mu(M_i)$ is
locally of the form $x+\mathfrak{h}^\perp$ for some $x \in
\mathfrak{t}^*.$
\end{lemma}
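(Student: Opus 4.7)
The plan is to analyze the differential of $\mu$ restricted to $M_i$ at an arbitrary point $p\in M_i$, show that its image is exactly the annihilator $\mathfrak{h}^\perp\subset\mathfrak{t}^*$, and then invoke the constant-rank theorem to conclude that $\mu(M_i)$ is locally an affine translate of $\mathfrak{h}^\perp$. The central tool throughout is the defining identity of the moment map, $\langle d\mu_p(v),\xi\rangle = \omega_p(\xi_M(p),v)$ for $\xi\in\mathfrak{t}$ and $v\in T_pM$, where $\xi_M$ denotes the fundamental vector field of $\xi$.

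Two preliminary structural facts will be needed. First, since $\overline{M_i}$ is a connected component of $M^{T_i}$, i.e.\ the fixed-point set of a compact group acting by symplectomorphisms, it is a symplectic submanifold of $M$; hence $\omega|_{T_pM_i}$ is nondegenerate. Second, because $T$ is abelian the $T$-action commutes with $T_i$, so every fundamental vector field $\xi_M$ preserves $M^{T_i}$, and in particular $\xi_M(p)\in T_pM_i$ for every $\xi\in\mathfrak{t}$.

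The heart of the argument is then the identification $d\mu_p(T_pM_i)=\mathfrak{h}^\perp$. The inclusion $\subseteq$ is immediate: for $\xi\in\mathfrak{h}$ we have $\xi_M(p)=0$, hence $\langle d\mu_p(v),\xi\rangle=0$ for all $v\in T_pM_i$. For the reverse direction, it suffices to show that any $\xi\in\mathfrak{t}\setminus\mathfrak{h}$ fails to annihilate $d\mu_p(T_pM_i)$. Since $p\in M_i$ has stabilizer exactly $T_i$, the Lie algebra of its stabilizer is precisely $\mathfrak{h}$, so $\xi_M(p)\neq 0$. Combined with the nondegeneracy of $\omega|_{T_pM_i}$ and the fact that $\xi_M(p)\in T_pM_i$, this produces $v\in T_pM_i$ with $\omega_p(\xi_M(p),v)\neq 0$, i.e.\ $\langle d\mu_p(v),\xi\rangle\neq 0$.

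Finally, since $d(\mu|_{M_i})_p$ has image $\mathfrak{h}^\perp$ independently of $p$, the restriction $\mu|_{M_i}$ has constant rank $\dim\mathfrak{h}^\perp$. The constant-rank theorem then furnishes local coordinates in which $\mu|_{M_i}$ is a linear projection, so its image is locally a flat submanifold with tangent space $\mathfrak{h}^\perp$, hence locally of the form $x+\mathfrak{h}^\perp$. The only delicate point, and the main place to be careful, is verifying that $\xi_M(p)$ actually lies in the symplectic submanifold $T_pM_i$ so that nondegeneracy there can be invoked; this is precisely why the commutativity of $T$ is flagged explicitly above.
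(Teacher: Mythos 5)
Your proof is correct. The paper gives no argument of its own here --- it simply cites \cite[Theorem 3.6]{GS} --- and your reasoning (the image of $d\mu_p|_{T_pM_i}$ equals the annihilator $\mathfrak{h}^\perp$ via nondegeneracy of $\omega$ on the symplectic submanifold $\overline{M_i}$ and tangency of the $\mathfrak{t}$-fundamental fields, followed by the constant-rank theorem and the observation that a connected submanifold with constant tangent space is an open piece of an affine translate) is exactly the standard proof of that cited result.
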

By this lemma, $\dim_\R \mu(M_i) = m$ for $(n-1-m)$-dimensional
$T_i.$ Each image $\mu(\overline{M_i})$ (resp. $\mu(M_i)$) is called
an \textit{$m$-face} (resp. \textit{an open $m$-face}) of the x-ray
if $T_i$ is $(n-1-m)$-dimensional. Our interest is mainly in open
$(n-2)$-faces of the x-ray, i.e. codimension one in
$\mathfrak{t}^*.$ Figure \ref{figure: illustrate} is an example of
x-ray with $n=3$ where thick lines are $(n-2)$-faces. Now, we can
prove the main theorem.

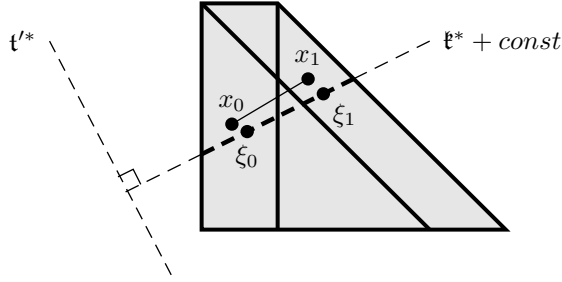
\begin{figure}[ht]
\begin{center}
\begin{pspicture}(-2, -1)(5, 3.5)\footnotesize

\pspolygon[fillstyle=solid,fillcolor=lightgray, linewidth=1.5pt](0,
0)(4, 0)(1, 3)(0, 3)(0, 0)

\psline[linewidth=1.5pt](1, 0)(1, 3)

\psline[linewidth=1.5pt](3, 0)(0, 3)

\psline[linewidth=0.5pt, linestyle=dashed](-1, 0.5)(0, 1)
\psline[linewidth=0.5pt, linestyle=dashed](-0.4, -0.6)(-2, 2.5)
\psline[linewidth=0.5pt, linestyle=dashed](2, 2)(3, 2.5)

\psline[linewidth=0.5pt](-0.8, 0.6)(-0.9, 0.8)(-1.1, 0.7)

\uput[r](3, 2.5){$\mathfrak{k}^*+const$}

\uput[l](-2, 2.5){$\mathfrak{t}^{\prime *}$}

\psline[linewidth=1.7pt, linestyle=dashed](0, 1)(2, 2)

\psline[linewidth=0.5pt](0.4, 1.4)(1.4, 2)

\psdots[dotsize=5pt](0.4, 1.4)(1.4, 2)

\uput[u](0.4, 1.4){$x_0$} \uput[u](1.4, 2){$x_1$}

\psdots[dotsize=5pt](0.6, 1.3)(1.6, 1.8)

\uput[d](0.6, 1.3){$\xi_0$}

\uput[dr](1.6, 1.8){$\xi_1$}

\end{pspicture}
\end{center}
\caption{\label{figure: illustrate} Proof of Theorem \ref{main}}
\end{figure}

\begin{proof}[Proof of Theorem \ref{main}]
When $n=2,$ we obtain a proof by \cite[Lemma 2.19]{K2}. So, we
assume $n \ge 3.$ Pick arbitrary two points $x_0, x_1$ in the image
of $\mu.$ We should show that
\begin{equation}
\label{equation: log-concave formular} t \log \big(\DHfunction (x_1)
\big)+(1-t) \log \big(\DHfunction (x_0) \big) \le \log
\big(\DHfunction ( t x_1 + (1-t) x_0 ) \big)
\end{equation}
for each $t \in [0, 1].$ Put $x_t = t x_1 + (1-t) x_0.$

Let us fix a decomposition $T=S^1 \times \cdots \times S^1.$ By the
decomposition, we identify $\mathfrak{t}$ with $\R^{n-1},$ and
$\mathfrak{t}$ carries the usual Riemannian metric $\langle ,
\rangle_0$ which is a bi-invariant metric. This metric gives the
isomorphism
\begin{equation*}
\iota : \mathfrak{t} \rightarrow \mathfrak{t}^*, ~ X \mapsto \langle
\cdot, X \rangle_0.
\end{equation*}
For a small $\epsilon
>0,$ pick two regular values $\xi_i$ in the ball $B(x_i, \epsilon)$
for $i=0, 1$ which satisfy the following two conditions:
\begin{itemize}
  \item[i.] $\xi_1-\xi_0 \in \iota(\Q^{n-1}),$
  \item[ii.] the line $L$ containing $\xi_0, \xi_1$
  in $\mathfrak{t}^*$ meets each open $m$-face
  transversely for $m=1, \cdots, n-2.$
\end{itemize}
Transversality guarantees that the line does not meet any open
$m$-face for $m \le n-3.$ Put
\begin{equation*}
\xi_t = t \xi_1 + (1-t) \xi_0 \text{ and } X = \iota^{-1} (\xi_1 -
\xi_0).
\end{equation*}
Let $\mathfrak{k} \subset \mathfrak{t}$ be the one-dimensional
subalgebra spanned by $X.$ By i., $\mathfrak{k}$ becomes a Lie
algebra of a circle subgroup of $T,$ call it $K.$ Let
$\mathfrak{t}^\prime$ be the orthogonal complement of $\mathfrak{k}$
in $\mathfrak{t}.$ Again by i., $\mathfrak{t}^\prime$ becomes a Lie
subgroup of a $(n-2)$-dimensional subtorus of $T,$ call it
$T^\prime.$ Let
\begin{equation*}
p: \mathfrak{t}^* \rightarrow \mathfrak{t}^{\prime
*} = \iota (\mathfrak{t}^\prime)
\end{equation*}
be the orthogonal projection along $\mathfrak{k}^* = \iota
(\mathfrak{k}^\prime).$ If we put $\mu^\prime = p \circ \mu,$ then
$\mu^\prime : M \rightarrow \mathfrak{t}^{\prime *}$ is a moment map
of the restriced $T^\prime$-action on $M.$ Put $\xi^\prime =
p(\xi_t)$ for $t \in [0, 1].$

We would show that $\xi^\prime$ is a regular value of $\mu^\prime.$
For this, we show that each point $x \in \mu^{\prime
-1}(\xi^\prime)$ is a regular point of $\mu^\prime.$ By ii. and
Lemma \ref{lemma: perpendicular subalgebra}, stabilizer $T_x$ is
finite or one-dimensional. If $T_x$ is finite, then $x$ is a regular
point of $\mu$ so that it is also a regular point of $\mu^\prime.$
If $T_x$ is one-dimensional, then $\mu(x)$ is a point of an open
$(n-2)$-face $\mu(M_i)$ such that $x \in M_i.$ Let $\mathfrak{h}$ be
the Lie algebra of $T_i = T_x.$ By Lemma \ref{lemma: perpendicular
subalgebra}, $p(d\mu(T_x M_i))=p(\mathfrak{h}^\perp),$ and the
kernel $\mathfrak{k}$ of $p$ is not contained in
$\mathfrak{h}^\perp$ by transversality. So, $p(\mathfrak{h}^\perp)$
is the whole $\mathfrak{t}^{\prime *}$ because $\dim
\mathfrak{h}^\perp = \dim \mathfrak{t}^{\prime *},$ and this means
that $x$ is a regular point of $\mu^\prime.$ Therefore, we have
shown that $\xi^\prime$ is a regular value of $\mu^\prime.$

Since $\xi^\prime$ is a regular value, the preimage $\mu^{\prime
-1}(\xi^\prime)$ is a manifold  and $T^\prime$ acts almost freely on
it, i.e. stabilizers are finite. So, if we denote by
$M_{\xi^\prime}$ the symplectic reduction $\mu^{\prime
-1}(\xi^\prime)/T^\prime,$ then it becomes a symplectic orbifold
carrying the induced symplectic $T/T^\prime$-action. We can observe
that the image of $\mu^{\prime -1}(\xi^\prime)$ through $\mu$ is the
thick dashed line in Figure \ref{figure: illustrate}. Since $K/(K
\cap T^\prime) \cong T/T^\prime,$ we will regard $K/(K \cap
T^\prime)$ and $\mathfrak{k}$ as $T/T^\prime$ and its Lie algebra,
respectively. The map $\mu_X := \langle \mu, X \rangle$ induces a
map on $M_{\xi^\prime}$ by $T$-invariance of $\mu,$ call it just
$\mu_X$ where $\langle ~, ~ \rangle : \mathfrak{t}^* \times
\mathfrak{t} \rightarrow \R$ is the evaluation pairing. Then, we can
observe that $\mu_X$ is a Hamiltonian of the $K/(K \cap
T^\prime)$-action on $M_{\xi^\prime},$ and that $M_{\xi_t}$ is
symplectomorphic to the symplectic reduction of $M_{\xi^\prime}$ at
the regular value $\langle \xi_t, X \rangle$ with respect to
$\mu_X.$ If we denote by $\DHfunction_X$ the Duistermaat-Heckman
function of $\mu_X : M_{\xi^\prime} \rightarrow \R,$ then we have
$\DHfunction (\xi_t) = \DHfunction_X (\langle \xi_t, X \rangle)$ for
$t \in [0, 1].$ Since $M_{\xi^\prime}$ is a four-dimensional
symplectic orbifold with Hamiltonian circle action, $\DHfunction_X$
is log-concave by Lemma \ref{orbifoldlogconcave} below. Since $x_t$
and $\xi_t$ are sufficiently close and $\DHfunction$ is continuous
by \cite{DH}, we can show (\ref{equation: log-concave formular}) by
log-concavity of $\DHfunction_X.$
\end{proof}

\begin{lemma}\label{orbifoldlogconcave}
    Let $(N,\sigma)$ be a closed four dimensional Hamiltonian $S^1$-orbifold. Then the density function of the Duistermaat-Heckman measure is
    log-concave.
\end{lemma}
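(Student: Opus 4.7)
The plan is to prove the stronger statement that $\DHfunction$, the Duistermaat-Heckman density function of the orbifold Hamiltonian $S^1$-action, is concave on the image $\mu(N) = [a,b]$ of the moment map $\mu\colon N \to \R$. This suffices because any positive concave function on an interval is log-concave, by the arithmetic-geometric mean inequality applied pointwise.

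First, I would invoke the Duistermaat-Heckman theorem in the orbifold setting to obtain that $\DHfunction$ is continuous on $[a,b]$ and a polynomial of degree at most $\dim N/2 - 1 = 1$ on each open subinterval of regular values of $\mu$. Hence $\DHfunction$ is piecewise affine with breakpoints contained in the finite set of critical values $c_1 < \cdots < c_k$, and concavity reduces to verifying $\DHfunction'(c_i^-) \ge \DHfunction'(c_i^+)$ at each interior $c_i$.

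Next, I would compute the slope jump at $c_i$ via a wall-crossing analysis localized at the fixed set. Because $\dim N = 4$, every connected component $F$ of $N^{S^1} \cap \mu^{-1}(c_i)$ is either an isolated orbifold point (with two nonzero normal weights $a_1, a_2 \in \Z$ and a finite cyclic isotropy group of order $|\Gamma_F|$) or a $2$-dimensional symplectic sub-orbifold (with a single nonzero normal weight $w \in \Z$). The $S^1$-equivariant orbifold Darboux normal form around $F$ gives an explicit expression for its contribution to $\DHfunction'(c_i^+) - \DHfunction'(c_i^-)$: for an isolated point it is proportional to $1/(|a_1 a_2|\,|\Gamma_F|)$ with sign $\mathrm{sign}(a_1)\,\mathrm{sign}(a_2)$; for a fixed surface it is a signed multiple of $\mathrm{area}(F)/|w|$, with sign determined by whether $F$ is a local minimum or a local maximum of $\mu$.

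The main obstacle will be the case-by-case check that, at every interior critical value, the total slope jump is non-positive. Because the orbifold factors $|\Gamma_F|$ enter only as positive multiplicative constants in the denominators, the sign pattern is identical to the smooth four-dimensional case, and the case analysis reduces to the one performed in \cite[Lemma 2.19]{K2}. This yields $\DHfunction'(c_i^+) \le \DHfunction'(c_i^-)$ at every interior $c_i$, hence concavity of $\DHfunction$, and hence log-concavity on $\mu(N)$.
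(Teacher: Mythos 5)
Your proposal is correct and follows essentially the same route as the paper: both reduce the lemma to the facts that $\DHfunction$ is piecewise affine (degree $\le \dim N/2-1=1$ on intervals of regular values) and that its slope drops at each interior critical value because every fixed component there is an isolated point whose two weights have opposite signs, the orbifold group orders entering only as positive factors in the denominators. The only differences are cosmetic: you compute the slope jump via the Guillemin--Lerman--Sternberg wall-crossing normal form and package the conclusion as concavity of a positive function, while the paper obtains the same quantity $\sum_p 1/(d_p\,p_1p_2)$ by applying the Atiyah--Bott--Berline--Vergne localization formula to a symplectic cut of $\phi^{-1}([c-\epsilon,c+\epsilon])$ and verifies $\DHfunction\cdot\DHfunction''-(\DHfunction')^2\le 0$ directly.
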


\begin{proof}
    Let $\phi : N \rightarrow \R$ be a moment map. Then the density function $\DHfunction : \textrm{Im}\phi \rightarrow \R_{\geq 0}$ of the Duistermaat-Heckman measure is given by
    $$ \DHfunction(t) = \int_{N_t} \sigma_t $$
    for any regular value $t \in \textrm{Im} \phi$. Let $(a,b) \subset \textrm{Im} \phi$ be an open interval consisting of regular values of $\phi$ and fix $t_0 \in (a,b)$.
    By the Duistermaat-Heckman theorem \cite{DH}, $[\sigma_t] - [\sigma_{t_0}] = -e(t-t_0)$ for any $t \in (a,b)$, where $e$ is the Euler class of the $S^1$-fibration $\phi^{-1}(t_0) \rightarrow \phi^{-1}(t_0) / S^1$. Therefore
    $$ DH'(t) = - \int_{N_t} e $$ and $$ DH''(t) = 0 $$ for any $t \in (a,b)$. Note that $\DHfunction(t)$ is log-concave on $(a,b)$ if and only if it satisfies  $\DHfunction(t)\cdot \DHfunction''(t) - \DHfunction'(t)^2 \leq 0$ for all $t \in (a,b)$. Hence $\DHfunction(t)$ is log-concave on any open intervals consisting of regular values.

    Let $c$ be any interior critical value of $\phi$ in $\textrm{Im}\phi$. Then it is enough to show that the jump in the derivative of $(\log{\DHfunction})'$ is negative at $c$.
    First, we will show that the jump of the value $ \DHfunction'(t) = - \int_{N_t} e $ is negative at $c$. Choose a small $\epsilon > 0$ such that
    $(c-\epsilon, c+\epsilon)$ does not contain a critical value except for $c$.
    Let $N_c$ be a symplectic cut of $\phi^{-1}[c-\epsilon,c+\epsilon]$ along the extremum so that $N_c$ becomes a closed Hamiltonian $S^1$-orbifold whose maximum is the reduced space $M_{c+\epsilon}$ and the minimum is $N_{c-\epsilon}$.
    Using the Atiyah-Bott-Berline-Vergne localization formula for orbifolds \cite{M}, we have

    $$0 = \int_{N_c} 1 = \sum_{p \in N^{S^1} \cap \phi^{-1}(c)} \frac{1}{d_p} \frac{1}{p_1p_2 \lambda^2} + \int_{M_{c-\epsilon}} \frac {1}{\lambda + e_-}  + \int_{N_{c+\epsilon}} \frac {1}{- \lambda - e_+}$$
    which is equivalent to
    $$ 0 = \sum_{p \in N^{S^1} \cap \phi^{-1}(c)} \frac{1}{p_1p_2} = \int_{N_{c-\epsilon}} e_- - \int_{N_{c+\epsilon}}e_+,$$
    where $d_p$ is the order of the local group of $p$, $p_1$ and $p_2$ are the weights of the tangential $S^1$-representation on $T_pN$, and
    $e_-$ ($e_+$ respectively) is the Euler class of $\phi^{-1}(c-\epsilon)$ ($\phi^{-1}(c+\epsilon)$ respectively). Since $c$ is in the interior of
    $\textrm{Im}\phi$, we have $p_1p_2 < 0 $ for any $p \in N^{S^1} \cap \phi^{-1}(c)$. Hence the jump of $ \DHfunction'(t) = - \int_{N_t} e $ is negative
    at $c$, which implies that the jump of $\log{\DHfunction(t)}' = \frac{\DHfunction'(t)}{\DHfunction(t)}$ is negative at $c$ (by continuity of $\DHfunction(t)$). It finishes the proof.
\end{proof}

\bigskip
\bibliographystyle{amsalpha}

\begin{thebibliography}{1}

\bibitem[C]{C}
Y. Cho, \textit{The log-concavity conjecture on semifree symplectic
$S^1$-manifolds with isolated fixed points}, arXiv:1103.2998.

\bibitem[DH]{DH}
J. J. Duistermaat and G. J. Heckman, \textit{On the variation in the
cohomology of the symplectic form of the reduced phase space},
Invent. Math. \textbf{69} (1982), 259--268.

\bibitem[Gr]{Gr}
W. Graham, \textit{Logarithmic convexity of push-forward measures},
Invent. Math. \textbf{123} (1996), 315--322.

\bibitem[GS]{GS}
V. Guillemin and S. Sternberg, \textit{Convexity property of the
moment mapping}, Invent. Math. \textbf{67} (1982), 491--513.


\bibitem[K1]{K1}
Y. Karshon, \textit{Example of a non-log-concave Duistermaat-Heckman
measure}, Mathematical Research Letters \textbf{3} (1996), no. 4,
537--540.


\bibitem[K2]{K2}
Y. Karshon, \textit{Periodic Hamiltonian flows on four dimensional
manifolds}, Mem. Amer. Math. Soc. \textbf{141} (1999), no. 672.


\bibitem[L]{L}
Y. Lin, \textit{The log-concavity conjecture for the
Duistermaat-Heckman measure revisited}, Int. Math. Res. Not. (2008),
no. 10, Art. ID rnn027, 19 pp.

\bibitem[M]{M}
E. Meinrenken, \textit{Symplectic surgery and the $\textrm{Spin}^c$-Dirac operators}, Advances in Mathematics, \textbf{134} (1998), 240-277.

\bibitem[O1]{O1}
A. Okounkov. \textit{Why would multiplicities be log-concave?},
The Orbit Method in Geometry and Physics Boston, MA : Birkhauser Boston; (2003), Progress in Mathematics, 213 (Marseille, 2000), 329-347.

\bibitem[O2]{O2}
    A. Okounkov, \textit{Log-concavity of multiplicities with application to characters of $U(\infty)$}, Advanced in Mathematics. \textbf{127} (1997), 258--282.
\end{thebibliography}

\end{document}